\numberwithin{equation}{section} \swapnumbers
\newtheorem{satz}{Satz}[section]
\newtheorem{theorem}[satz]{Theorem}
\newtheorem{proposition}[satz]{Proposition}
\newtheorem{corollary}[satz]{Corollary}
\newtheorem{lemma}[satz]{Lemma}
\newtheorem{definition}[satz]{Definition}
\newtheorem{remark}[satz]{Remark}
\newcommand{\bbr}{\mathbb{R}}
\newcommand{\bbe}{\mathbb{E}}
\newcommand{\bbn}{\mathbb{N}}
\newcommand{\bbp}{\mathbb{P}}
\newcommand{\bbq}{\mathbb{Q}}
\newcommand{\calb}{\mathscr{B}}
\newcommand{\calf}{\mathscr{F}}
\newcommand{\Var}{{\rm Var}}
\newcommand{\conv}{{\rm conv}}
\newcommand{\bbI}{\mathbbm{1}}
\begin{document}

\title[A note on the von Weizs\"{a}cker theorem]{A note on the von Weizs\"{a}cker theorem}
\author{Stefan Tappe}
\address{Karlsruhe Institute of Technology, Institute of Stochastics, Postfach 6980, 76049 Karlsruhe, Germany}
\email{jens-stefan.tappe@kit.edu}
\date{24 August, 2020}
\begin{abstract}
The von Weizs\"{a}cker theorem states that every sequence of nonnegative random variables has a subsequence which is Ces\`{a}ro convergent to a nonnegative random variable which might be infinite. The goal of this note is to provide a description of the set where the limit is finite. For this purpose, we use a decomposition result due to Brannath and Schachermayer.
\end{abstract}
\keywords{von Weizs\"{a}cker theorem, space of random variables, convex set, boundedness in probability}
\subjclass[2010]{40A05, 52A07, 46A16}

\maketitle\thispagestyle{empty}

\section{Introduction}

Koml\'{o}s's theorem states that every $L^1$-bounded sequence has a subsequence which is Ces\`{a}ro convergent to a finite limit; see \cite{Komlos} and \cite{Berkes}. The paper \cite{W} was dealing with the question whether the $L^1$-boundedness can be dropped. Its main result states that every nonnegative sequence in $L^0$ has a subsequence which is Ces\`{a}ro convergent, but the limit can be infinite. More precisely, we have the following result.

\begin{theorem}[von Weizs\"{a}cker]\label{thm-W}
Let $(\xi_n)_{n \in \bbn} \subset L_+^0$ be a sequence of nonnegative random variables. Then there exist a subsequence $(\xi_{n_k})_{k \in \bbn}$ and a nonnegative random variable $\xi : \Omega \to [0,\infty]$ such that the following statements are true:
\begin{enumerate}
\item[(a)] The subsequence $(\xi_{n_{k}})_{k \in \bbn}$ is $\bbp$-almost surely Ces\`{a}ro convergent to $\xi$.

\item[(b)] There exists an equivalent probability measure $\bbq \approx \bbp$ such that the sequence $(\xi_{n_k} \bbI_{\{ \xi < \infty \}})_{k \in \bbn}$ is $L^1(\bbq)$-bounded.
\end{enumerate}
In addition, we can choose the subsequence $(\xi_{n_k})_{k \in \bbn}$ such that even one of the following stronger conditions is fulfilled:
\begin{enumerate}
\item[(a')] For every permutation $\pi : \bbn \to \bbn$ the sequence $(\xi_{n_{\pi(k)}})_{k \in \bbn}$ is $\bbp$-almost surely Ces\`{a}ro convergent to $\xi$.

\item[(a'')] For every further subsequence $(n_{k_l})_{l \in \bbn}$ the sequence $(\xi_{n_{k_l}})_{l \in \bbn}$ is $\bbp$-almost surely Ces\`{a}ro convergent to $\xi$.
\end{enumerate}
\end{theorem}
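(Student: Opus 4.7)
The plan is to combine the Brannath--Schachermayer decomposition lemma with a change of measure and Koml\'os's subsequence theorem.

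The first step is to apply the Brannath--Schachermayer lemma to $(\xi_n)_{n \in \bbn}$, which provides forward convex combinations $\zeta_n \in \conv(\xi_n,\xi_{n+1},\ldots)$ and a measurable set $A \subset \Omega$, essentially unique up to null sets, such that $(\zeta_n \bbI_A)$ is bounded in probability while $\zeta_n \to \infty$ $\bbp$-a.s.\ on $A^c$. The set $A$ is the natural candidate for $\{\xi < \infty\}$, and it may be characterised as a maximal element among all measurable sets $B$ for which some forward convex combinations of $(\xi_n \bbI_B)$ are bounded in probability.

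The second step is to transport everything into an $L^1$-setting. Using the bounded-in-probability property on $A$, a standard weighted-indicator construction yields a strictly positive density $\phi \in L^1(\bbp)$ defining a probability measure $\bbq \approx \bbp$, together with a subsequence $(\xi_{n_k})_{k \in \bbn}$ of the original sequence such that $(\xi_{n_k} \bbI_A)$ is bounded in $L^1(\bbq)$; this already delivers item (b). Applying Koml\'os's theorem on $L^1(\bbq)$ to $(\xi_{n_k} \bbI_A)$ — in the strong form which also supplies the subsequence principle and permutation invariance — we pass to a further subsequence (still denoted $(\xi_{n_k})$) whose Ces\`aro averages converge $\bbq$-a.s., hence $\bbp$-a.s., to some $\xi_A \in L^1(\bbq)$, and this convergence is stable under further subsequences and permutations.

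It then remains to handle the complement $A^c$: the goal is to show that the Ces\`aro averages of the \emph{same} subsequence $(\xi_{n_k})$ diverge to $+\infty$ $\bbp$-almost surely on $A^c$. This is the step I expect to be the main obstacle, since Brannath--Schachermayer only guarantees divergence of forward convex combinations, whereas Ces\`aro averages of a specific subsequence are a rigid choice of weights. The plan is to exploit the maximality of $A$: if a subset $B \subseteq A^c$ with $\bbp(B) > 0$ admitted a further subsequence along which the Ces\`aro averages on $B$ stayed bounded in probability, then $A \cup B$ would also carry bounded-in-probability convex combinations, contradicting the maximality of $A$. A diagonal extraction against truncation levels $M_j \uparrow \infty$ and an exhausting family of sets of positive measure then refines $(\xi_{n_k})$ so that the Ces\`aro averages diverge on all of $A^c$. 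Setting $\xi := \xi_A \bbI_A + \infty \cdot \bbI_{A^c}$ yields (a), and the stability already built into the Koml\'os step, together with the obvious invariance of a.s.\ divergence to $+\infty$ under further subsequences and permutations, provides (a') and (a'').
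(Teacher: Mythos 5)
You should first be aware that the paper does not prove Theorem~\ref{thm-W} at all: it is quoted from the literature, with the existence of a subsequence satisfying (a') and (b) attributed to \cite{W} and the variant satisfying (a'') obtained by running the same argument with Koml\'os's original theorem \cite{Komlos} in place of Berkes's extension \cite{Berkes} (see also \cite[Thm.~5.2.3]{Kabanov-Safarian}). So the comparison is really with von Weizs\"acker's proof. Your outline follows the same broad strategy (split $\Omega$ into a ``good'' part carrying an equivalent measure with $L^1$-boundedness, apply Koml\'os there, force divergence on the complement), but it has a genuine gap exactly at the step you yourself flag, and the fix you propose does not close it. The negation of ``the Ces\`aro means of $(\xi_{n_k})$ tend to $+\infty$ a.s.\ on $A^c$'' only says that $\liminf_k \bar{\xi}_{n_k} < M$ on some $B \subset A^c$ of positive measure; the indices realizing this bound depend on $\omega$, so you do not get a \emph{fixed} further subsequence whose Ces\`aro means are bounded in probability on $B$. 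Worse, even if you had such a sequence, it would not contradict the maximality of $A$ in the Brannath--Schachermayer sense: hereditary unboundedness on $\Omega_u$ asserts that the \emph{entire} convex hull $C|_B$ fails to be bounded in probability, and a single bounded-in-probability sequence of convex combinations is far weaker (any convex set containing $0$ contains a bounded sequence). Von Weizs\"acker bridges precisely this gap by a different device: he works with near-maximizers of the functional $h \mapsto \bbe[1-e^{-h}]$ over the forward convex hulls $\conv(\xi_n,\xi_{n+1},\dots)$ and uses the near-optimality together with an Egorov-type uniformity argument to show that \emph{every} forward convex combination of the tail, in particular every tail Ces\`aro mean of every further subsequence, must explode on $A^c$. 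Without an argument of this kind your proof of part (a) on $\{\xi=\infty\}$, and hence of (a') and (a''), is incomplete.

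Two smaller points. First, you attribute to Brannath--Schachermayer the existence of forward convex combinations $\zeta_n \to \infty$ a.s.\ on $A^c$; their Lemma~2.3 (Lemma~\ref{lemma-B-Schach} here) only yields hereditary unboundedness in probability of the convex hull on $\Omega_u$, which is strictly weaker, and upgrading it to a.s.\ divergence of forward convex combinations is itself a nontrivial step not contained in that lemma. Second, you claim a form of Koml\'os's theorem delivering the subsequence principle and permutation invariance simultaneously; the statement of Theorem~\ref{thm-W} deliberately offers (a') \emph{or} (a''), obtained from two different versions of the Koml\'os theorem, and no claim is made that a single subsequence satisfies both.
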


The existence of a subsequence such that (a') and (b) are fulfilled follows from \cite{W}. Slightly modifying its proof by using Koml\'{o}s's theorem from \cite{Komlos} (rather than the version from \cite{Berkes}) gives us the existence of a subsequence such that (a'') and (b) are fulfilled; see also \cite[Thm. 5.2.3]{Kabanov-Safarian} for such a result.

Let $(\xi_{n_k})_{k \in \bbn}$ be a subsequence as in Theorem~\ref{thm-W}; that is, conditions (a) and (b) are fulfilled. Note that the limit $\xi$ can be infinite. In this note we will provide a description of the sets $\{ \xi < \infty \}$ and $\{ \xi = \infty \}$. In \cite{W} it is already indicated that $\{ \xi < \infty \}$ should be the largest subset on which $(\xi_{n_k})_{k \in \bbn}$ is $L^1(\bbq)$-bounded for some equivalent probability measure $\bbq \approx \bbp$. However, \`{a} priori it is not clear whether such a set exists. We will approach this problem by looking at sets which are bounded in probability, without performing a measure change. For this purpose, we will use a decomposition result from \cite{B-Schach} which states that for every convex subset $C \subset L_+^0$ there exists a partition $\{ \Omega_b, \Omega_u \}$ such that $C|_{\Omega_b}$ is bounded in probability and $C$ is hereditarily unbounded in probability on $\Omega_u$. We refer to Section \ref{sec-proof} for the precise definitions. The set $\Omega_b$ is characterized as the largest subset on which the convex set $C$ is bounded in probability. 

Now, we define the convex hulls $C,\bar{C} \subset L_+^0$ as
\begin{align}\label{conv-hulls}
C := \conv \, \{ \xi_{n_k} : k \in \bbn \} \quad \text{and} \quad
\bar{C} := \conv \, \{ \bar{\xi}_{n_k} : k \in \bbn \},
\end{align}
where
\begin{align*}
\bar{\xi}_{n_k} := \frac{1}{k} \sum_{l=1}^k \xi_{n_l} \quad \text{for each $k \in \bbn$,}
\end{align*}
and denote by $\{ \Omega_b, \Omega_u \}$ and $\{ \bar{\Omega}_b, \bar{\Omega}_u \}$ the corresponding partitions according to the decomposition result from \cite{B-Schach}. Note that $\bar{C} \subset C$, because
\begin{align*}
\{ \bar{\xi}_{n_k} : k \in \bbn \} \subset C.
\end{align*}
Our result reads as follows.

\begin{proposition}\label{prop-main}
We have $\{ \xi < \infty \} = \Omega_b = \bar{\Omega}_b$ and $\{ \xi = \infty \} = \Omega_u = \bar{\Omega}_u$ up to $\bbp$-null sets.
\end{proposition}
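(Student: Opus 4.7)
The plan is to establish the chain of inclusions
\[
\{ \xi < \infty \} \subset \Omega_b \subset \bar{\Omega}_b \subset \{ \xi < \infty \}
\]
up to $\bbp$-null sets, from which both equalities $\{ \xi < \infty \} = \Omega_b = \bar{\Omega}_b$ follow, and then taking complements gives $\{ \xi = \infty \} = \Omega_u = \bar{\Omega}_u$.

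For the first inclusion, I would invoke condition (b) of Theorem~\ref{thm-W}. The sequence $(\xi_{n_k} \bbI_{\{ \xi < \infty \}})_{k \in \bbn}$ is $L^1(\bbq)$-bounded by some constant $M$, and since the $L^1(\bbq)$-norm is convex, every $f \in C$ satisfies $\bbe_{\bbq}[f \bbI_{\{ \xi < \infty \}}] \leq M$. By Markov's inequality the family $\{ f \bbI_{\{ \xi < \infty \}} : f \in C \}$ is bounded in probability under $\bbq$, and since $\bbq \approx \bbp$, also under $\bbp$. In the terminology of \cite{B-Schach}, this says that $C$ is bounded in probability on $\{ \xi < \infty \}$, and the maximality characterisation of $\Omega_b$ in the decomposition result then forces $\{ \xi < \infty \} \subset \Omega_b$ up to null sets.

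The middle inclusion $\Omega_b \subset \bar{\Omega}_b$ is immediate from $\bar{C} \subset C$: since $C$ is bounded in probability on $\Omega_b$, so is the subset $\bar{C}$, and maximality of $\bar{\Omega}_b$ applies. For the last inclusion $\bar{\Omega}_b \subset \{ \xi < \infty \}$, note that $(\bar{\xi}_{n_k})_{k \in \bbn} \subset \bar{C}$ is bounded in probability on $\bar{\Omega}_b$, whereas by (a) it converges $\bbp$-a.s. to $\xi$. If there were a set $A \subset \bar{\Omega}_b$ of positive measure on which $\xi = \infty$, Fatou's lemma applied to $\liminf_k \bbI_{\{ \bar{\xi}_{n_k} > M \} \cap A} \geq \bbI_A$ would give $\bbp(\bar{\xi}_{n_k} \bbI_{\bar{\Omega}_b} > M) \geq \bbp(A)/2$ for every $M$ and all sufficiently large $k$, contradicting boundedness in probability.

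The only step that needs genuine care is the first inclusion, where one has to translate the $L^1(\bbq)$-boundedness of the particular sequence $(\xi_{n_k} \bbI_{\{ \xi < \infty \}})$ into boundedness in probability under $\bbp$ of the \emph{entire} convex hull $C$ restricted to $\{ \xi < \infty \}$; this uses both the convexity of the $L^1$-norm and the equivalence $\bbq \approx \bbp$. The remaining arguments are essentially formal, relying on the inclusion $\bar{C} \subset C$, the maximality property of the Brannath--Schachermayer decomposition, and the standard fact that an a.s.\ convergent sequence with limit infinite on a positive-measure set cannot be bounded in probability.
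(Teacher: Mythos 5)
Your proof is correct, and its skeleton is exactly the paper's: the chain $\{\xi<\infty\}\subset\Omega_b\subset\bar{\Omega}_b\subset\{\xi<\infty\}$ (the paper's Lemmas~\ref{lemma-3}, \ref{lemma-1} and \ref{lemma-2}, respectively), followed by taking complements. The middle inclusion is argued identically. Where you diverge is in the tools used for the other two inclusions. For $\{\xi<\infty\}\subset\Omega_b$ the paper simply cites Lemma~\ref{lemma-conv-bounded} (Kardaras; equivalently \cite[Lemma~2.3]{B-Schach}) to pass from $L^1(\bbq)$-boundedness of the sequence to $\bbp$-boundedness of its convex hull; you instead prove the implication you need by hand, via convexity of the $L^1(\bbq)$-norm, Markov's inequality, and the invariance of boundedness in probability under equivalent measure changes. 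For $\bar{\Omega}_b\subset\{\xi<\infty\}$ the paper uses tightness of the laws of $\bar{\xi}_{n_k}\bbI_{\bar{\Omega}_b}$ and Prohorov's theorem to identify the weak limit as a probability measure on $\bbr$, whereas you argue directly by contradiction: if $\xi=\infty$ on a set $A\subset\bar{\Omega}_b$ of positive measure, Fatou gives $\liminf_k\bbp(\{\bar{\xi}_{n_k}>M\}\cap A)\geq\bbp(A)$ for every $M$, contradicting boundedness in probability. Your versions are more elementary and self-contained (no Prohorov, no external equivalence lemma), at the cost of not exhibiting the weak-convergence statement that the paper later reuses in Remark~\ref{rem-main}(3) and Corollary~\ref{cor-finite}(vi)--(vii); the paper's route gets those consequences for free.
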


\begin{remark}\label{rem-main}
Let us mention some further consequences:
\begin{enumerate}
\item The set $\{ \xi < \infty \}$ is the largest subset on which the convex hull of $(\xi_{n_k})_{k \in \bbn}$ or $(\bar{\xi}_{n_k})_{k \in \bbn}$ is bounded in probability.

\item The set $\{ \xi < \infty \}$ is also the largest subset on which the sequence $(\xi_{n_k})_{k \in \bbn}$ or $(\bar{\xi}_{n_k})_{k \in \bbn}$ is $L^1(\bbq)$-bounded for some equivalent probability measure $\bbq \approx \bbp$. This is in accordance with the findings in \cite{W}.

\item The set $\{ \xi < \infty \}$ is also the largest subset on which every sequence of convex combinations of $(\xi_{n_k})_{k \in \bbn}$ or $(\bar{\xi}_{n_k})_{k \in \bbn}$ has a convergent subsequence in the sense of weak convergence of their distributions.
\end{enumerate}
\end{remark}

For more details, we refer to Section \ref{sec-proof}, and in particular Corollary \ref{cor-finite}, where we investigate when the limit $\xi$ is almost surely finite. Note that Proposition \ref{prop-main} provides a link between the set $\{ \xi < \infty \}$ and the partition $\{ \Omega_b,\Omega_u \}$, which can be used in two directions; in particular, in certain situations we can conclude that the sets $C$ and $\bar{C}$ are bounded in probability. Suppose we have given a subsequence $( \xi_{n_k} )_{k \in \bbn}$ as in the von Weizs\"{a}cker theorem (Theorem \ref{thm-W}), and suppose we know the partition $\{ \Omega_b,\Omega_u \}$. Then we can easily determine the set $\{ \xi < \infty \}$; this is illustrated in Section \ref{sec-atomic} in the situation where we have an atomic probability space. Conversely, suppose we have given a subsequence $( \xi_{n_k} )_{k \in \bbn}$ as in the von Weizs\"{a}cker theorem (Theorem \ref{thm-W}), and suppose we know the set $\{ \xi < \infty \}$. Then we can easily determine the partitions $\{ \Omega_b,\Omega_u \}$ and $\{ \bar{\Omega}_b,\bar{\Omega}_u \}$. For illustration, we will assume in Section \ref{sec-SLLN} that the sequence $( \xi_n )_{n \in \bbn}$ satisfies the strong law of large numbers (SLLN). As we will see, then we can take the original sequence $(\xi_n)_{n \in \bbn}$ in the von Weizs\"{a}cker theorem; that is, we do not have to pass to a subsequence $(\xi_{n_k})_{k \in \bbn}$. As a consequence, the sets $C$ and $\bar{C}$ are given by
\begin{align*}
C = \conv \, \{ \xi_n : n \in \bbn \} \quad \text{and} \quad
\bar{C} = \conv \, \{ \bar{\xi}_n : n \in \bbn \},
\end{align*}
and we will provide a criterion when these sets are bounded in probability.

\section{Proof of the result}\label{sec-proof}

Let $(\Omega,\calf,\bbp)$ be a probability space. We denote by $L^0 = L^0(\Omega,\calf,\bbp)$ the space of all equivalence classes of random variables, where two random variables $X$ and $Y$ are identified if $\bbp(X=Y) = 1$. We denote by $L_+^0 = L_+^0(\Omega,\calf,\bbp)$ the convex cone of all nonnegative random variables; that is, all $X \in L^0$ such that $\bbp(X \geq 0) = 1$. It is well-known that $L^0$ equipped with the translation invariant metric
\begin{align*}
d(X,Y) = \bbe[|X-Y| \wedge 1], \quad X,Y \in L^0
\end{align*}
is a complete topological vector space. The induced convergence is just convergence in probability; that is, for a sequence $(X_n)_{n \in \bbn} \subset L^0$ and a random variable $X \in L^0$ we have $d(X_n,X) \to 0$ if and only if $X_n \overset{\bbp}{\to} X$. Furthermore, for every equivalent probability measure $\bbq \approx \bbp$ the translation invariant metric
\begin{align*}
d_{\bbq}(X,Y) = \bbe_{\bbq}[|X-Y| \wedge 1], \quad X,Y \in L^0
\end{align*}
induces the same topology.

\begin{definition}
A subset $C \subset L^0$ is called \emph{bounded in probability} (or \emph{$\bbp$-bounded}) if for every $\epsilon > 0$ there exists $M > 0$ such that
\begin{align*}
\sup_{X \in C} \bbp( |X| > M ) < \epsilon.
\end{align*}
\end{definition}

\begin{remark}
It is well-known that a subset $C \subset L^0$ is topologically bounded if and only if it is bounded in probability.
\end{remark}

\begin{remark}\label{rem-tight}
Note that a subset $C \subset L^0$ is bounded in probability if and only if the family of distributions $\{ \bbp \circ X : X \in C \}$ is tight.
\end{remark}

For a subset $C \subset L^0$ and an event $B \in \calf$ we agree on the notation
\begin{align*}
C|_B := \{ X \bbI_B : X \in C \}
\end{align*}

\begin{definition}
A subset $C \subset L^0$ is called \emph{hereditarily unbounded in probability} (or \emph{hereditarily $\bbp$-unbounded}) on a set $A \in \calf$ if for every $B \in \calf$ with $B \subset A$ and $\bbp(B) > 0$ the set $C|_B$ is not bounded in probability.
\end{definition}

\begin{definition}
A subset $C \subset L^0$ is called \emph{$L^1(\bbp)$-bounded} if
\begin{align*}
\sup_{X \in C} \bbe[|X|] < \infty.
\end{align*}
\end{definition}

The following two results which be useful for our analysis.

\begin{lemma}\cite[Lemma 2.3]{B-Schach}\label{lemma-B-Schach}
Let $C \subset L_+^0$ be a convex subset of $L_+^0$. Then there exists a partition $\{ \Omega_u,\Omega_b \}$ of $\Omega$ into disjoint sets $\Omega_u, \Omega_b \in \calf$, unique up to $\bbp$-null sets, such that:
\begin{enumerate}
\item $C|_{\Omega_b}$ is bounded in probability.

\item $C$ is hereditarily unbounded in probability on $\Omega_u$.
\end{enumerate}
\end{lemma}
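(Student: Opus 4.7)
The plan is to establish the two equalities $\{\xi<\infty\} = \Omega_b$ and $\{\xi<\infty\} = \bar\Omega_b$ up to $\bbp$-null sets, from which the statements about $\Omega_u$ and $\bar\Omega_u$ follow by complementation. Each equality will be obtained by proving two opposite inclusions.

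For the inclusion $\{\xi<\infty\} \subseteq \Omega_b \cap \bar\Omega_b$, I would rely on condition (b) of Theorem~\ref{thm-W}. Fix an equivalent measure $\bbq \approx \bbp$ with $\sup_k \bbe_\bbq[\xi_{n_k}\bbI_{\{\xi<\infty\}}] =: M < \infty$. Since the $\xi_{n_k}$ are nonnegative, any convex combination $X = \sum_k \lambda_k \xi_{n_k}$ satisfies $\bbe_\bbq[X\bbI_{\{\xi<\infty\}}] \leq M$, and the inclusion $\bar C \subseteq C$ (each $\bar\xi_{n_k}$ is itself a convex combination of the $\xi_{n_l}$) extends the same bound to elements of $\bar C$. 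Hence both $C|_{\{\xi<\infty\}}$ and $\bar C|_{\{\xi<\infty\}}$ are $L^1(\bbq)$-bounded, therefore bounded in $\bbq$-probability, and therefore bounded in $\bbp$-probability since $d$ and $d_\bbq$ induce the same topology. If $\bbp(\{\xi<\infty\} \cap \Omega_u) > 0$, then $C|_{\{\xi<\infty\}\cap\Omega_u}$ would still be bounded in probability (restricting to a smaller set can only decrease the probabilities in the defining inequality), contradicting the hereditary unboundedness of $C$ on $\Omega_u$ supplied by Lemma~\ref{lemma-B-Schach}; the same argument for $\bar C$ rules out $\bbp(\{\xi<\infty\} \cap \bar\Omega_u) > 0$.

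For the reverse inclusion $\Omega_b \cup \bar\Omega_b \subseteq \{\xi<\infty\}$, I would use the a.s.\ convergence $\bar\xi_{n_k} \to \xi$ from condition (a). Suppose toward contradiction that $A := \Omega_b \cap \{\xi=\infty\}$ satisfies $\bbp(A) > 0$. Since $\bar\xi_{n_k} \to \infty$ $\bbp$-a.s.\ on $A$, dominated convergence gives, for every $M > 0$,
\begin{align*}
\bbp(\bar\xi_{n_k}\bbI_{\Omega_b} > M) \geq \bbp(A \cap \{\bar\xi_{n_k} > M\}) \xrightarrow{k \to \infty} \bbp(A).
\end{align*}
Because $\bar\xi_{n_k} \in C$, this yields $\sup_{X \in C|_{\Omega_b}} \bbp(X > M) \geq \bbp(A)$ for every $M$, contradicting that $C|_{\Omega_b}$ is bounded in probability. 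Replacing $C$ by $\bar C$ and using $\bar\xi_{n_k} \in \bar C$ gives the analogous conclusion for $\bar\Omega_b$.

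The essential conceptual point---more than any serious technical obstacle---is to notice the natural pairing between the two ingredients of Theorem~\ref{thm-W} and the two defining properties of the Brannath--Schachermayer partition: the $L^1(\bbq)$-bound from (b), together with convexity, controls $\{\xi<\infty\}$ from above via boundedness in probability, while the a.s.\ convergence from (a) controls $\{\xi<\infty\}$ from below via the concrete Ces\`{a}ro averages $\bar\xi_{n_k}$, which lie in both $C$ and $\bar C$. The \emph{hereditary} nature of the unboundedness in Lemma~\ref{lemma-B-Schach} is precisely what upgrades these two one-sided inclusions into the stated exact equalities.
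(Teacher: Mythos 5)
Your proposal does not prove the statement it is supposed to prove. The statement is Lemma~\ref{lemma-B-Schach}, the Brannath--Schachermayer decomposition: for an \emph{arbitrary} convex subset $C \subset L_+^0$ there exists a partition $\{\Omega_u,\Omega_b\}$ of $\Omega$, unique up to $\bbp$-null sets, with $C|_{\Omega_b}$ bounded in probability and $C$ hereditarily unbounded in probability on $\Omega_u$. This is a general statement about convex subsets of $L_+^0$; the sequence $(\xi_{n_k})_{k \in \bbn}$, the limit $\xi$, and Theorem~\ref{thm-W} play no role in it. What you prove instead is Proposition~\ref{prop-main}, the identification $\{\xi<\infty\}=\Omega_b=\bar{\Omega}_b$, and --- crucially --- your argument takes Lemma~\ref{lemma-B-Schach} as given: you invoke ``the hereditary unboundedness of $C$ on $\Omega_u$ supplied by Lemma~\ref{lemma-B-Schach}'' as well as the existence and maximality of the partitions $\{\Omega_b,\Omega_u\}$ and $\{\bar{\Omega}_b,\bar{\Omega}_u\}$ themselves. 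Read as a proof of the lemma, this is circular; read as a proof of Proposition~\ref{prop-main}, it is essentially the paper's own argument (Lemmas~\ref{lemma-1}--\ref{lemma-3} together with Lemma~\ref{lemma-conv-bounded} and Prohorov's theorem), but that is a different statement. Note that the paper itself does not prove Lemma~\ref{lemma-B-Schach} either; it imports it from \cite[Lemma 2.3]{B-Schach}.

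If you want an actual proof of the lemma, the standard route is an exhaustion argument that never mentions any particular sequence. Let $\calg$ be the family of all $B \in \calf$ such that $C|_B$ is bounded in probability. From $\bbp(|X\bbI_{B_1 \cup B_2}|>M) \le \bbp(|X\bbI_{B_1}|>M)+\bbp(|X\bbI_{B_2}|>M)$ one sees that $\calg$ is stable under finite unions, and the estimate $\bbp(|X\bbI_B|>M) \le \bbp(|X\bbI_{B_N}|>M)+\bbp(B \setminus B_N)$ for an increasing union $B=\bigcup_N B_N$ extends this to countable unions, since $\bbp(B \setminus B_N)$ does not depend on $X$. Now set $s:=\sup_{B \in \calg}\bbp(B)$, choose $B_n \in \calg$ with $\bbp(B_n) \to s$, and put $\Omega_b := \bigcup_{n \in \bbn} B_n \in \calg$, so that $\bbp(\Omega_b)=s$; define $\Omega_u := \Omega \setminus \Omega_b$. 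If some $B \subset \Omega_u$ with $\bbp(B)>0$ had $C|_B$ bounded in probability, then $\Omega_b \cup B \in \calg$ would satisfy $\bbp(\Omega_b \cup B)=s+\bbp(B)>s$, a contradiction; hence $C$ is hereditarily unbounded in probability on $\Omega_u$. For uniqueness, if $\{\Omega_b',\Omega_u'\}$ is another such partition, then $C|_{\Omega_b' \cap \Omega_u}$ is bounded in probability (being a restriction of $C|_{\Omega_b'}$), so hereditary unboundedness on $\Omega_u$ forces $\bbp(\Omega_b' \cap \Omega_u)=0$, and symmetrically $\bbp(\Omega_b \cap \Omega_u')=0$. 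This is the content you were asked to supply, and it is of a quite different nature from the argument you gave.
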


In particular, for every event $B \in \calf$ such that $C|_B$ is bounded in probability, we have $B \subset \Omega_b$ up to $\bbp$-null sets.

\begin{lemma}\cite[Prop. 1.16]{Kostas}\label{lemma-conv-bounded}
Let $K \subset L_+^0$ be a subset. Then the following statements are equivalent:
\begin{enumerate}
\item[(i)] $\conv \, K$ is bounded in probability.

\item[(ii)] There exists an equivalent probability measure $\bbq \approx \bbp$ such that $K$ is $L^1(\bbq)$-bounded.
\end{enumerate}
\end{lemma}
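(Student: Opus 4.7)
The implication (ii) $\Rightarrow$ (i) is short. Assuming $M := \sup_{X \in K} \bbe_\bbq[X] < \infty$, the nonnegativity of the elements of $K$ makes the same bound $M$ persist under arbitrary convex combinations (since $\bbe_\bbq[\sum_i \lambda_i X_i] = \sum_i \lambda_i \bbe_\bbq[X_i] \leq M$), so $\conv K$ is $L^1(\bbq)$-bounded as well. Markov's inequality then gives that $\conv K$ is bounded in $\bbq$-probability, and since $d$ and $d_\bbq$ induce the same topology on $L^0$ (as noted in the excerpt), also in $\bbp$-probability.

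For the nontrivial implication (i) $\Rightarrow$ (ii) I would use a Halmos--Savage type exhaustion. Define
\[
\calz := \bigl\{ Z \in L^\infty : 0 \leq Z \leq 1 \text{ and } \sup_{X \in \conv K} \bbe[ZX] < \infty \bigr\},
\]
and aim to produce $Z^* \in \calz$ with $Z^* > 0$ $\bbp$-a.s. Once this is done, $\bbq := (Z^*/\bbe[Z^*]) \cdot \bbp$ is equivalent to $\bbp$ and makes $K$ $L^1(\bbq)$-bounded, since $\bbe_\bbq[X] = \bbe[Z^*X]/\bbe[Z^*]$ is uniformly bounded on $K \subset \conv K$.

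A preliminary step is the stability of $\calz$ under suitably weighted countable convex combinations: given $Z_n \in \calz$ with constants $c_n := \sup_{X \in \conv K} \bbe[Z_n X]$, the element $Z := \sum_n \lambda_n Z_n$ with, say, $\lambda_n := 2^{-n}/(1+c_n)$ converges in $L^\infty$, belongs to $\calz$ after rescaling into $[0,1]$, and has $\{Z > 0\} = \bigcup_n \{Z_n > 0\}$ up to $\bbp$-null sets. Applying this to a sequence $Z_n \in \calz$ with $\bbp(Z_n > 0) \nearrow p := \sup\{\bbp(Z > 0) : Z \in \calz\}$ then yields some $Z^* \in \calz$ achieving $\bbp(Z^* > 0) = p$.

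The main obstacle is to show $p = 1$. Suppose for contradiction that $A := \{Z^* = 0\}$ has $\bbp(A) > 0$; it then suffices to exhibit $Z' \in \calz$ with $\bbp(\{Z' > 0\} \cap A) > 0$, because $(Z^* + \lambda Z')/(1+\lambda)$ for small $\lambda > 0$ belongs to $\calz$ (by the stability just mentioned) and has strictly larger support than $Z^*$, contradicting the maximality of $p$. Constructing such a $Z'$ is the technical heart of the proof, and it is precisely where assumption (i) enters: boundedness in probability of $\conv K$ yields truncation levels $M_n$ with $\sup_{X \in \conv K} \bbp(X > M_n) < 2^{-n}$, and from this uniform tail control one builds, via a Yan-type exhaustion argument internal to $A$ combined with the convexity of $K$ (to pass from pointwise control over individual elements of $K$ to a uniform bound over $\conv K$), an $L^\infty_+$-function $Z'$ supported on $A$ with $\sup_{X \in \conv K} \bbe[Z'X] < \infty$. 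This uniformization step is the delicate point of the argument.
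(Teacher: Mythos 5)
You have the right skeleton, but there is a genuine gap at exactly the point where the lemma's mathematical content lies. The implication (ii) $\Rightarrow$ (i) is fine (linearity of expectation alone gives the uniform $L^1(\bbq)$-bound on $\conv K$; nonnegativity is not even needed there), and the Halmos--Savage exhaustion producing a maximal-support element $Z^*$ of your class $\calz$ is routine and correct as written. What is missing is the decisive step: given $A = \{Z^* = 0\}$ with $\bbp(A) > 0$, construct $Z' \in \calz$ with $\bbp(\{Z' > 0\} \cap A) > 0$. You describe this as following from uniform tail control plus ``a Yan-type exhaustion argument internal to $A$ combined with the convexity of $K$,'' but no argument is actually given --- and observe that this step, restricted to $A$, \emph{is} the statement of the lemma itself, merely localized: find a nontrivial bounded weight under which the convex, bounded-in-probability set $\conv K$ has uniformly bounded expectation. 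Your proof therefore reduces the lemma to itself on subsets; the exhaustion machinery shows only that it suffices to produce a weight with nontrivial support, it does not produce one.

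To see why the tail bounds alone do not suffice: from $\sup_{X \in \conv K} \bbp(X > M_n) < 2^{-n}$ one gets, for each \emph{fixed} $X$, sets of large measure on which $X \leq M_n$, hence weights such as $\bbI_{A \cap \{X \leq M_n\}}$ or $\bbI_A/(1+X)$ with controlled expectation against $X$ --- but these weights depend on $X$. Passing from such an $X$-dependent family to a single $Z'$ working simultaneously for all of $\conv K$ is the actual theorem, and it requires a separation or duality argument: Yan's theorem after a suitable reduction to $L^1$, the construction of \cite[Lemma 2.3.3]{B-Schach}, or the num\'{e}raire argument of Kardaras in \cite{Kostas}. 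None of these is carried out in your proposal. For comparison, the paper does not prove this lemma at all: it quotes it from \cite[Prop.~1.16]{Kostas} and remarks that (i) $\Rightarrow$ (ii) also follows from \cite[Lemma 2.3.3]{B-Schach}. So a complete write-up must either do the same, or genuinely supply this separation step rather than labelling it the ``delicate point.''
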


\begin{remark}
The implication (i) $\Rightarrow$ (ii) also follows from \cite[Lemma 2.3.3]{B-Schach}. There it is even shown that one can find such an equivalent probability measure $\bbq \approx \bbp$ with bounded Radon-Nikodym density. 
\end{remark}

Now, let $(\xi_{n_k})_{k \in \bbn}$ be a subsequence as in Theorem \ref{thm-W}. Furthermore, let $C, \bar{C} \subset L_+^0$ be the convex sets given by (\ref{conv-hulls}). We denote by $\{ \Omega_u,\Omega_b \}$ and $\{ \bar{\Omega}_u,\bar{\Omega}_b \}$ the corresponding partitions according to Lemma \ref{lemma-B-Schach}.

\begin{lemma}\label{lemma-1}
We have $\Omega_b \subset \bar{\Omega}_b$ up to $\bbp$-null sets.
\end{lemma}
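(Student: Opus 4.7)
The plan is to read off the inclusion directly from the maximality property of the partition in Lemma~\ref{lemma-B-Schach}, leveraging the already-noted inclusion $\bar{C} \subset C$. The strategy has three short steps: pass the inclusion of convex hulls through restriction to $\Omega_b$, invoke the defining boundedness of $C|_{\Omega_b}$, and then apply the universal property of $\bar{\Omega}_b$.

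First, since $\bar{\xi}_{n_k} = \frac{1}{k}\sum_{l=1}^k \xi_{n_l}$ is a convex combination of elements of $\{\xi_{n_l} : l \in \bbn\}$, each $\bar{\xi}_{n_k}$ lies in $C$; taking convex hulls gives $\bar{C} \subset C$, as already remarked after (\ref{conv-hulls}). Restricting to $\Omega_b$ via multiplication by $\bbI_{\Omega_b}$ preserves inclusions, so $\bar{C}|_{\Omega_b} \subset C|_{\Omega_b}$.

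Next, by item (1) of Lemma~\ref{lemma-B-Schach} applied to the convex set $C$, the family $C|_{\Omega_b}$ is bounded in probability. Since any subset of a set that is bounded in probability is again bounded in probability (immediate from the definition via uniform tail estimates), $\bar{C}|_{\Omega_b}$ is bounded in probability.

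Finally, I appeal to the maximality assertion stated right after Lemma~\ref{lemma-B-Schach}: for every event $B \in \calf$ with $\bar{C}|_B$ bounded in probability, one has $B \subset \bar{\Omega}_b$ up to $\bbp$-null sets. Taking $B := \Omega_b$ yields $\Omega_b \subset \bar{\Omega}_b$ up to $\bbp$-null sets, which is the claim. There is no real obstacle here; the lemma is essentially a reformulation of monotonicity of the ``bounded part'' operation with respect to inclusion of convex subsets of $L_+^0$, and the only ingredient beyond $\bar{C} \subset C$ is the maximality property encoded in Lemma~\ref{lemma-B-Schach}.
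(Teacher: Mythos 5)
Your proof is correct and follows exactly the same route as the paper: from $\bar{C}\subset C$ deduce that $\bar{C}|_{\Omega_b}\subset C|_{\Omega_b}$ is bounded in probability, then invoke the maximality of $\bar{\Omega}_b$ from Lemma~\ref{lemma-B-Schach}. Your version merely spells out the intermediate steps that the paper leaves implicit.
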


\begin{proof}
Since $\bar{C} \subset C$ and $C|_{\Omega_b}$ is bounded in probability, the set $\bar{C}|_{\Omega_b}$ is also bounded in probability, and hence we have $\Omega_b \subset \bar{\Omega}_b$.
\end{proof}

\begin{lemma}\label{lemma-2}
We have $\bar{\Omega}_b \subset \{ \xi < \infty \}$ up to $\bbp$-null sets.
\end{lemma}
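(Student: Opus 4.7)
The plan is to argue by contradiction: assume that $B := \bar{\Omega}_b \cap \{\xi = \infty\}$ has positive probability, and extract a violation of the boundedness in probability of $\bar{C}|_{\bar{\Omega}_b}$ along the Cesàro means themselves.

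First I would record the easy observation that if $\bar{C}|_{\bar{\Omega}_b}$ is bounded in probability, then so is $\bar{C}|_B$ for any measurable $B \subset \bar{\Omega}_b$, since for every $X \in \bar{C}$ one has $X \bbI_B \leq X \bbI_{\bar{\Omega}_b}$ pointwise and hence $\bbp(X \bbI_B > M) \leq \bbp(X \bbI_{\bar{\Omega}_b} > M)$ for every $M > 0$. In particular, the sequence $(\bar{\xi}_{n_k} \bbI_B)_{k \in \bbn}$, which lies in $\bar{C}|_B$, must be bounded in probability.

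Next I would invoke part (a) of Theorem \ref{thm-W}: since $\bar{\xi}_{n_k} \to \xi$ $\bbp$-almost surely, on the event $B$ (where $\xi = \infty$) we have $\bar{\xi}_{n_k} \bbI_B \to \infty \cdot \bbI_B$ almost surely. Consequently, for every fixed $M > 0$,
\begin{align*}
\bbp\bigl( \bar{\xi}_{n_k} \bbI_B > M \bigr) \longrightarrow \bbp(B) \quad \text{as } k \to \infty,
\end{align*}
by dominated convergence applied to $\bbI_{\{\bar{\xi}_{n_k} \bbI_B > M\}}$ (or simply because $\{\bar{\xi}_{n_k} \bbI_B > M\}$ eventually contains $B$ up to a null set on a further subsequence, via $\liminf$). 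If $\bbp(B) > 0$, this forces
\begin{align*}
\sup_{k \in \bbn} \bbp\bigl( \bar{\xi}_{n_k} \bbI_B > M \bigr) \geq \bbp(B) > 0 \quad \text{for every } M > 0,
\end{align*}
so $(\bar{\xi}_{n_k} \bbI_B)_{k \in \bbn}$ is not bounded in probability, contradicting the first paragraph. Hence $\bbp(B) = 0$, which is exactly the claim $\bar{\Omega}_b \subset \{\xi < \infty\}$ up to $\bbp$-null sets.

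I do not expect any genuine obstacle here; the only subtlety is matching the pointwise domination $X \bbI_B \leq X \bbI_{\bar{\Omega}_b}$ to the definition of boundedness in probability on a subset, and making sure we use the $\bar{\xi}_{n_k}$ (which are elements of $\bar{C}$, since each $\bar{\xi}_{n_k}$ is a convex combination of the $\xi_{n_l}$ — but more importantly, they are trivially in $\bar{C}$ by definition) rather than the $\xi_{n_k}$ themselves; this is precisely why one introduces the auxiliary convex hull $\bar{C}$ alongside $C$.
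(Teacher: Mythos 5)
Your proof is correct, and it is genuinely more elementary than the paper's. The paper argues via tightness: since $\bar{C}|_{\bar{\Omega}_b}$ is bounded in probability, the laws $\bar{\mu}_k = \bbp \circ (\bar{\xi}_{n_k} \bbI_{\bar{\Omega}_b})$ form a tight family (Remark \ref{rem-tight}), so Prohorov's theorem yields a subsequence converging weakly to a probability measure $\mu$ on $(\bbr,\calb(\bbr))$; the almost sure convergence $\bar{\xi}_{n_{k_l}} \bbI_{\bar{\Omega}_b} \to \xi \bbI_{\bar{\Omega}_b}$ then forces $\mu = \bbp \circ (\xi \bbI_{\bar{\Omega}_b})$, which can only be a probability measure on $\bbr$ if $\xi < \infty$ almost surely on $\bar{\Omega}_b$. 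You instead take the contrapositive by hand: on $B = \bar{\Omega}_b \cap \{\xi = \infty\}$ the Ces\`{a}ro means diverge almost surely, so $\bbp(\bar{\xi}_{n_k}\bbI_B > M) \to \bbp(B)$ for every $M$, and $\bbp(B)>0$ would directly violate the definition of boundedness in probability for $\bar{C}|_B \subset \bar{C}|_{\bar{\Omega}_b}$ (the reduction from $\bar{\Omega}_b$ to $B$ via $X\bbI_B \le X\bbI_{\bar{\Omega}_b}$ is fine since all elements are nonnegative). Both arguments use exactly the same two inputs --- boundedness in probability of $\bar{C}|_{\bar{\Omega}_b}$ and almost sure Ces\`{a}ro convergence to $\xi$ --- but yours avoids Prohorov's theorem and the identification of weak limits entirely, replacing them with Fatou's lemma (or dominated convergence) applied to indicator functions; the paper's route is heavier but sets up the weak-compactness viewpoint that reappears in Remark \ref{rem-main}(3) and Corollary \ref{cor-finite}(vi)--(vii). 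You are also right that working with the $\bar{\xi}_{n_k}$ (elements of $\bar{C}$) rather than the $\xi_{n_k}$ is the point of introducing $\bar{C}$.
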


\begin{proof}
We define the sequence $(\bar{\mu}_k)_{k \in \bbn}$ of probability measures on $(\bbr,\calb(\bbr))$ as $\bar{\mu}_k := \bbp \circ (\bar{\xi}_{n_k} \bbI_{\bar{\Omega}_b})$ for each $k \in \bbn$. Since $\bar{C}|_{\bar{\Omega}_b}$ is $\bbp$-bounded, by Remark \ref{rem-tight} the sequence $(\bar{\mu}_k)_{k \in \bbn}$ is tight. By Prohorov's theorem there is a subsequence $(\bar{\mu}_{k_l})_{l \in \bbn}$ such that $\bar{\mu}_{k_l} \overset{w}{\to} \mu$ for some probability measure $\mu$ on $(\bbr,\calb(\bbr))$. On the other hand, since $\bar{\xi}_{n_{k_l}} \bbI_{\bar{\Omega}_b} \overset{\text{a.s.}}{\to} \xi \bbI_{\bar{\Omega}_b}$, we have $\mu = \bbp \circ (\xi \bbI_{\bar{\Omega}_b})$. Therefore, it follows that $\bbp$-almost surely $\xi < \infty$ on $\bar{\Omega}_b$.
\end{proof}

\begin{lemma}\label{lemma-3}
We have $\{ \xi < \infty \} \subset \Omega_b$ up to $\bbp$-null sets.
\end{lemma}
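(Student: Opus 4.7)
The plan is to use condition (b) of the von Weizs\"{a}cker theorem together with Lemma \ref{lemma-conv-bounded} to show that the convex set $C$, when restricted to $\{ \xi < \infty \}$, is bounded in probability; the characterization of $\Omega_b$ from Lemma \ref{lemma-B-Schach} then finishes the argument.

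First, I would invoke condition (b) in Theorem \ref{thm-W} to obtain an equivalent probability measure $\bbq \approx \bbp$ such that the set $K := \{ \xi_{n_k} \bbI_{\{ \xi < \infty \}} : k \in \bbn \}$ is $L^1(\bbq)$-bounded. Applying Lemma \ref{lemma-conv-bounded} (with $\bbq$ in place of $\bbp$; recall that $\bbp$-boundedness and $\bbq$-boundedness are equivalent since $d$ and $d_{\bbq}$ induce the same topology), we conclude that $\conv \, K$ is bounded in probability.

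Next, I would observe that $\conv \, K$ coincides with $C|_{\{ \xi < \infty \}}$. Indeed, multiplication by the indicator $\bbI_{\{ \xi < \infty \}}$ commutes with taking finite convex combinations, so any element of $C|_{\{ \xi < \infty \}}$ has the form $\sum_j a_j \xi_{n_{k_j}} \bbI_{\{ \xi < \infty \}}$ with $a_j \geq 0$ and $\sum_j a_j = 1$, which is precisely an element of $\conv \, K$, and vice versa. Hence $C|_{\{ \xi < \infty \}}$ is bounded in probability.

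Finally, by the characterization of $\Omega_b$ stated right after Lemma \ref{lemma-B-Schach} (every event $B$ on which $C|_B$ is bounded in probability must lie inside $\Omega_b$ up to null sets), we conclude $\{ \xi < \infty \} \subset \Omega_b$ up to $\bbp$-null sets. There is no genuine obstacle here; the only point to be careful about is the identification $\conv \, K = C|_{\{ \xi < \infty \}}$, which relies on the fact that multiplication by a fixed indicator is linear and preserves convex hulls.
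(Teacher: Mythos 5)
Your proof is correct and follows essentially the same route as the paper: condition (b) of Theorem \ref{thm-W} gives the $L^1(\bbq)$-boundedness, Lemma \ref{lemma-conv-bounded} upgrades this to boundedness in probability of $C|_{\{\xi<\infty\}}$, and the maximality of $\Omega_b$ from Lemma \ref{lemma-B-Schach} concludes. The only difference is that you spell out the identification $\conv K = C|_{\{\xi<\infty\}}$ and the final maximality step, which the paper leaves implicit; note also that Lemma \ref{lemma-conv-bounded} already quantifies over equivalent measures, so no change of reference measure is needed when invoking it.
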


\begin{proof}
By part (b) of Theorem \ref{thm-W} there exists an equivalent probability measure $\bbq \approx \bbp$ such that $(\xi_{n_k} \bbI_{\{ \xi < \infty \}})_{k \in \bbn}$ is $L^1(\bbq)$-bounded. Therefore, by Lemma \ref{lemma-conv-bounded} the set $C|_{\{ \xi < \infty \}}$ is $\bbp$-bounded, completing the proof.
\end{proof}

Now, the proof of Proposition \ref{prop-main} is a consequence of Lemmas \ref{lemma-1}--\ref{lemma-3}, and Remark \ref{rem-main} follows by additionaly taking into account Lemma \ref{lemma-conv-bounded} and Prohorov's theorem. 

Using Proposition \ref{prop-main} we can characterize when the limit $\xi$ is almost surely finite, and when it is almost surely infinite. As a consequence of the next result, the limit is almost surely finite if and only if every sequence of convex combinations has a convergent subsequence in the sense of weak convergence of their distributions.

\begin{corollary}\label{cor-finite}
The following statements are equivalent:
\begin{enumerate}
\item[(i)] We have $\xi < \infty$ almost surely.

\item[(ii)] The set $C$ is bounded in probability.

\item[(iii)] The set $\bar{C}$ is bounded in probability.

\item[(iv)] There exists an equivalent probability measure $\bbq \approx \bbp$ such that the sequence $(\xi_{n_k})_{k \in \bbn}$ is $L^1(\bbq)$-bounded.

\item[(v)] There exists an equivalent probability measure $\bbq \approx \bbp$ such that the sequence $(\bar{\xi}_{n_k})_{k \in \bbn}$ is $L^1(\bbq)$-bounded.

\item[(vi)] For every sequence $(\eta_n)_{n \in \bbn} \subset C$ there is a subsequence $(\eta_{n_k})_{k \in \bbn}$ such that $(\bbp \circ \eta_{n_k})_{k \in \bbn}$ converges weakly.

\item[(vii)] For every sequence $(\bar{\eta}_n)_{n \in \bbn} \subset \bar{C}$ there is a subsequence $(\bar{\eta}_{n_k})_{k \in \bbn}$ such that $(\bbp \circ \bar{\eta}_{n_k})_{k \in \bbn}$ converges weakly.
\end{enumerate}
\end{corollary}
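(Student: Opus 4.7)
\medskip

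The plan is to reduce everything to Proposition \ref{prop-main} together with Lemma \ref{lemma-conv-bounded}, Remark \ref{rem-tight}, and Prohorov's theorem; no substantial new ideas are needed.

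First I would establish (i) $\Leftrightarrow$ (ii) $\Leftrightarrow$ (iii). By Proposition \ref{prop-main} we have $\{\xi<\infty\} = \Omega_b = \bar{\Omega}_b$ up to $\bbp$-null sets, so (i) is equivalent to $\bbp(\Omega_b) = 1$ and to $\bbp(\bar{\Omega}_b) = 1$. If (i) holds, then $C = C|_{\Omega_b}$ up to null sets, which is bounded in probability by Lemma \ref{lemma-B-Schach}, giving (ii); conversely, if $C$ is bounded in probability, then by the uniqueness statement in Lemma \ref{lemma-B-Schach} (applied with $B=\Omega$) we get $\Omega_b = \Omega$ up to null sets, giving (i). The equivalence (i) $\Leftrightarrow$ (iii) is analogous, using $\bar{C}$ and $\bar{\Omega}_b$.

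Next I would handle (ii) $\Leftrightarrow$ (iv) and (iii) $\Leftrightarrow$ (v) by a direct application of Lemma \ref{lemma-conv-bounded}: setting $K := \{\xi_{n_k} : k \in \bbn\}$ we have $C = \conv K$, so the equivalence of $\bbp$-boundedness of $C$ with the existence of an equivalent $\bbq$ making $(\xi_{n_k})_{k\in\bbn}$ $L^1(\bbq)$-bounded is precisely the content of Lemma \ref{lemma-conv-bounded}; the argument for $\bar{C}$ and $(\bar{\xi}_{n_k})_{k\in\bbn}$ is identical.

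Finally I would deduce (ii) $\Leftrightarrow$ (vi) and (iii) $\Leftrightarrow$ (vii) from Remark \ref{rem-tight} and Prohorov's theorem. By Remark \ref{rem-tight}, $C$ is bounded in probability iff $\{\bbp \circ X : X \in C\}$ is tight. For (ii) $\Rightarrow$ (vi), tightness together with Prohorov yields a weakly convergent subsequence for any sequence drawn from $C$. For the converse (vi) $\Rightarrow$ (ii), observe that any sequence $(\eta_n)_{n\in\bbn} \subset C$ has, by assumption, a subsequence whose distributions converge weakly, so $\{\bbp \circ X : X \in C\}$ is relatively sequentially compact in the weak topology, hence tight by Prohorov (weak convergence on $\bbr$ being metrizable, sequential compactness suffices). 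The arguments for $\bar{C}$ and (vii) are the same.

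The whole proof is essentially a bookkeeping exercise; the one point to be careful about is the direction (vi) $\Rightarrow$ (ii), where one has to remember that on the Polish space $\bbr$ relative sequential weak compactness coincides with tightness, so Prohorov applies to the full family and not only to countable subsets.
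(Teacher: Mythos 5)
Your proposal is correct and follows essentially the same route as the paper, whose proof simply declares the corollary an immediate consequence of Proposition \ref{prop-main}, Lemma \ref{lemma-conv-bounded} and Prohorov's theorem; you have merely filled in the routine details (including the correct observation that on the Polish space $\bbr$ relative sequential weak compactness of the family of laws yields tightness via the converse half of Prohorov's theorem).
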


\begin{proof}
This is an immediate consequence of Proposition \ref{prop-main}, Lemma \ref{lemma-conv-bounded} and Prohorov's theorem.
\end{proof}

\begin{corollary}\label{cor-infinite}
The following statements are equivalent:
\begin{enumerate}
\item[(i)] We have $\xi = \infty$ almost surely.

\item[(ii)] The set $C$ is hereditarily unbounded in probability.

\item[(iii)] The set $\bar{C}$ is hereditarily unbounded in probability.
\end{enumerate}
\end{corollary}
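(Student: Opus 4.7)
The plan is to reduce the corollary to Proposition \ref{prop-main} together with the defining properties of the partition from Lemma \ref{lemma-B-Schach}, so that essentially no new work is required.

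First I would observe that, by Proposition \ref{prop-main}, $\{ \xi = \infty \} = \Omega_u = \bar{\Omega}_u$ up to $\bbp$-null sets, or equivalently $\{ \xi < \infty \} = \Omega_b = \bar{\Omega}_b$ up to $\bbp$-null sets. Hence condition (i), $\xi = \infty$ almost surely, is equivalent to $\bbp(\Omega_b) = 0$ and also to $\bbp(\bar{\Omega}_b) = 0$. The task thus reduces to showing that (ii) is equivalent to $\bbp(\Omega_b) = 0$, and that (iii) is equivalent to $\bbp(\bar{\Omega}_b) = 0$.

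For the equivalence between (ii) and $\bbp(\Omega_b) = 0$, I would argue as follows. If $\bbp(\Omega_b) = 0$, then $\Omega$ coincides with $\Omega_u$ up to a null set, so Lemma \ref{lemma-B-Schach}(2) directly says that $C$ is hereditarily unbounded in probability on $\Omega$. Conversely, if $C$ is hereditarily unbounded in probability, then the only candidate $B \subset \Omega_b$ with $B \in \calf$ for which $C|_B$ can be bounded in probability must satisfy $\bbp(B) = 0$; but Lemma \ref{lemma-B-Schach}(1) tells us $C|_{\Omega_b}$ itself is bounded in probability, so taking $B = \Omega_b$ forces $\bbp(\Omega_b) = 0$. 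The same argument, applied to $\bar{C}$ and $\bar{\Omega}_b$, yields the equivalence between (iii) and $\bbp(\bar{\Omega}_b) = 0$.

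I do not anticipate any real obstacle here: the content is entirely encoded in Proposition \ref{prop-main} and in the defining dichotomy of Lemma \ref{lemma-B-Schach}, and the corollary is essentially a matter of translating between "$\xi = \infty$ almost surely'' and "the bounded component of the partition is trivial''. The only point that deserves a sentence of explicit justification is that hereditary unboundedness on all of $\Omega$ is incompatible with $\bbp(\Omega_b) > 0$, which is immediate from the two defining properties of $\Omega_b$ and $\Omega_u$.
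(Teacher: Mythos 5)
Your proposal is correct and follows exactly the route the paper intends: the paper's proof is the one-line remark that the corollary is an immediate consequence of Proposition \ref{prop-main}, and your argument simply spells out the translation between ``$\xi=\infty$ almost surely'', ``$\bbp(\Omega_b)=0$'', and hereditary unboundedness via the defining dichotomy of Lemma \ref{lemma-B-Schach}. No gaps; the only detail worth keeping explicit is the one you already flagged, namely that $C|_{\Omega_b}$ bounded in probability forces $\bbp(\Omega_b)=0$ under hypothesis (ii).
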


\begin{proof}
This is an immediate consequence of Proposition \ref{prop-main}.
\end{proof}

\section{Sequences on an atomic probability space}\label{sec-atomic}

In this section we assume that the probability space $(\Omega,\calf,\bbp)$ is atomic. More precisely, we assume there are subsets $(A_m)_{m \in \bbn}$ of $\Omega$ such that the following conditions are fulfilled:
\begin{itemize}
\item The sets $(A_m)_{m \in \bbn}$ are pairwise disjoint with $\Omega = \bigcup_{m \in \bbn} A_m$.

\item We have $\calf = \sigma(A_m : m \in \bbn)$.

\item We have $\bbp(A_m) > 0$ for all $m \in \bbn$.
\end{itemize}
Let $(\xi_n)_{n \in \bbn} \subset L_+^0$ be a sequence of nonnegative random variables. Furthermore, let $(\xi_{n_k})_{k \in \bbn}$ be a subsequence and let $\xi : \Omega \to [0,\infty]$ be a nonnegative random variable such that conditions (a) and (b) in the von Weizs\"{a}cker theorem (Theorem \ref{thm-W}) are fulfilled. We wish to determine the set $\{ \xi < \infty \}$. There are nonnegative numbers $(c_{n,m})_{n,m \in \bbn} \subset \bbr_+$ such that
\begin{align*}
\xi_n = \sum_{m=1}^{\infty} c_{n,m} \bbI_{A_m} \quad \text{for each $n \in \bbn$.}
\end{align*}
Let $J \subset \bbn$ be the set of all $m \in \bbn$ such that the sequence $(c_{n_k,m})_{k \in \bbn}$ is bounded, and let $I \subset J$ be the set of all $m \in \bbn$ such that the sequence $(c_{n,m})_{n \in \bbn}$ is bounded.

\begin{proposition}
We have $\bigcup_{m \in I} A_m \subset \{ \xi < \infty \} = \bigcup_{m \in J} A_m$ up to $\bbp$-null sets.
\end{proposition}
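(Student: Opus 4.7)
The strategy is to apply Proposition \ref{prop-main}, which reduces the problem to computing the partition $\{\Omega_b,\Omega_u\}$ for $C=\conv\{\xi_{n_k}:k\in\bbn\}$. Since $I\subset J$, the inclusion $\bigcup_{m\in I}A_m\subset\bigcup_{m\in J}A_m$ is immediate, so the task is to prove $\{\xi<\infty\}=\bigcup_{m\in J}A_m$ up to $\bbp$-null sets. Setting $\Omega':=\bigcup_{m\in J}A_m$ and $\Omega'':=\bigcup_{m\notin J}A_m$, the plan is to identify $\Omega'=\Omega_b$ and $\Omega''=\Omega_u$; these two inclusions combined with the disjointness $\Omega'\cup\Omega''=\Omega$ force equality up to null sets.

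For $\Omega'\subset\Omega_b$, I would show that $C|_{\Omega'}$ is bounded in probability. The key observation is that on an atom $A_m$ with $m\in J$, every convex combination $X=\sum_{k=1}^K\lambda_k\xi_{n_k}$ satisfies $X|_{A_m}=\sum_k\lambda_k c_{n_k,m}\leq M_m$ pointwise, where $M_m:=\sup_k c_{n_k,m}<\infty$. Given $\epsilon>0$, pick a finite $J_0\subset J$ with $\bbp\bigl(\bigcup_{m\in J\setminus J_0}A_m\bigr)<\epsilon$ and set $M:=\max_{m\in J_0}M_m$; then $\{X\bbI_{\Omega'}>M\}\subset\bigcup_{m\in J\setminus J_0}A_m$ for every $X\in C$, hence $\sup_{X\in C}\bbp(X\bbI_{\Omega'}>M)<\epsilon$. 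By the uniqueness clause in Lemma \ref{lemma-B-Schach}, this gives $\Omega'\subset\Omega_b$ up to null sets.

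Conversely, for $\Omega''\subset\Omega_u$, I would verify that $C$ is hereditarily unbounded in probability on $\Omega''$. Let $B\in\calf$ with $B\subset\Omega''$ and $\bbp(B)>0$; then $B\cap A_m$ has positive probability for some $m\notin J$, and since $A_m$ is an atom we have $B\supset A_m$ up to null sets. The unboundedness of $(c_{n_k,m})_{k\in\bbn}$ means that for every $M>0$ there is some $k$ with $c_{n_k,m}>M$, and then $\{\xi_{n_k}\bbI_B>M\}\supset A_m$, so $\sup_{X\in C}\bbp(X\bbI_B>M)\geq\bbp(A_m)>0$ uniformly in $M$. Hence $C|_B$ is not bounded in probability, as required. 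Assembling the two inclusions with Proposition \ref{prop-main} yields the claim. No single step is deep; the only point worth flagging is that a single uniform bound for $C|_{\Omega'}$ does not exist in general when $J$ is infinite, which is precisely why the truncation to the finite subfamily $J_0$ (and the consequent passage from a pointwise bound to a bound in probability) is needed.
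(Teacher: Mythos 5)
Your proof is correct and follows the same overall strategy as the paper: reduce the statement to identifying the Brannath--Schachermayer partition of $C$ via Proposition \ref{prop-main}, and then show that $\bigcup_{m\in J}A_m$ is the bounded part and its complement the hereditarily unbounded part. Where you diverge is in how you verify these two claims. The paper routes both through Lemma \ref{lemma-conv-bounded}: it constructs an explicit equivalent measure $\bbq$ with weights proportional to $2^{-m}/C_m$ on the atoms indexed by $J$, checks $L^1(\bbq)$-boundedness of $(\xi_{n_k}\bbI_{U_b})_{k\in\bbn}$, and for the complement shows that no equivalent measure can make the sequence $L^1$-bounded on any atom $A_m$ with $m\notin J$. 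You instead verify boundedness in probability directly from the definition: every convex combination is uniformly bounded on each atom $A_m$, $m\in J$, by $M_m=\sup_k c_{n_k,m}$, and truncating to a finite subfamily $J_0$ carrying all but $\epsilon$ of the mass of $\Omega'$ yields the required uniform tail estimate; for the unbounded part you use that every positive-probability set in $\calf=\sigma(A_m:m\in\bbn)$ contained in $\Omega''$ must contain a full atom. Your route is more elementary --- it avoids Lemma \ref{lemma-conv-bounded} and the measure change entirely --- and your closing remark correctly flags the one point where care is needed (no single uniform bound over all of $J$ exists when $J$ is infinite, so the passage through the finite subfamily is essential). The paper's version has the mild advantage of exhibiting the equivalent measure explicitly, which ties the computation back to condition (b) of Theorem \ref{thm-W}. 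Both arguments are complete.
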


\begin{proof}
We only have to prove that $\{ \xi < \infty \} = \bigcup_{m \in J} A_m$ up to $\bbp$-null sets. We set $J_b := J$ and $J_u := \bbn \setminus J$, and the convex hull $C \subset L_+^0$ is defined by (\ref{conv-hulls}). By virtue of Proposition \ref{prop-main}, we have to show that $\Omega_b = \bigcup_{m \in J_b} A_m$ up to $\bbp$-null sets, where $\{ \Omega_b,\Omega_u \}$ denotes the partition from Lemma \ref{lemma-B-Schach}. For this purpose, we set $U_b := \bigcup_{m \in J_b} A_m$ and $U_u := \bigcup_{m \in J_u} A_m$. First, we show that $C|_{U_b}$ is bounded in probability. Note that
\begin{align*}
C|_{U_b} = \conv \{ \xi_{n_k} \bbI_{U_b} : k \in \bbn \}.
\end{align*}
For each $m \in J_b$ there is a constant $C_m \geq 1$ such that $c_{n_k,m} \leq C_m$ for each $k \in \bbn$. We define the sequence $(q_m)_{m \in \bbn} \subset (0,\infty)$ as
\begin{align*}
q_m := \frac{2^{-m}}{C_m}, \quad m \in I_b,
\\ q_m := 2^{-m}, \quad m \in I_u.
\end{align*}
By the geometric series we have
\begin{align*}
K := \sum_{m \in \bbn} q_m < \infty,
\end{align*}
and hence we can define the equivalent probability measure $\bbq \approx \bbp$ as
\begin{align*}
\bbq(A_m) := \frac{q_m}{K}, \quad m \in \bbn.
\end{align*}
Then by the geometric series we have
\begin{align*}
\sup_{k \in \bbn} \bbe_{\bbq}[ \xi_{n_k} \bbI_{U_b} ] &= \sup_{k \in \bbn} \bbe_{\bbq} \Bigg[ \bigg( \sum_{m=1}^{\infty} c_{n_k,m} \bbI_{A_m} \bigg) \cdot \bigg( \sum_{m \in J_b} \bbI_{A_m} \bigg) \Bigg]
\\ &= \sup_{k \in \bbn} \bbe_{\bbq} \bigg[ \sum_{m \in J_b} c_{n_k,m} \bbI_{A_m} \bigg] = \sup_{k \in \bbn} \sum_{m \in I_b} c_{n_k,m} \bbq(A_m)
\\ &\leq \sum_{m \in I_b} C_m \bbq(A_m) = \frac{1}{K} \sum_{m \in I_b} 2^{-m} < \infty.
\end{align*}
By Lemma \ref{lemma-conv-bounded} this shows that $C|_{U_b}$ is bounded in probability. Next, we will prove that $C|_{U_u}$ is hereditarily unbounded in probability. For this purpose, let $\bbq \approx \bbp$ be an arbitrary equivalent probability measure. Then we have $\bbq(A_m) > 0$ for all $m \in \bbn$. Therefore, for each $m \in J_u$ we obtain
\begin{align*}
\sup_{k \in \bbn} \bbe_{\bbq}[ \xi_{n_k} \bbI_{A_m} ] = \sup_{k \in \bbn} c_{n_k,m} \bbq(A_m) = \infty.
\end{align*}
Hence, by Lemma \ref{lemma-conv-bounded} the set $C|_{U_u}$ is hereditarily unbounded in probability, which completes the proof.
\end{proof}

\section{Sequences satisfying the strong law of large numbers}\label{sec-SLLN}

As in the previous sections, let $(\xi_n)_{n \in \bbn} \subset L_+^0$ be a sequence of nonnegative random variables. We say that $( \xi_n )_{n \in \bbn}$ satisfies the strong law of large numbers (SLLN) if there is a nonnegative, possibly infinite constant $\mu \in [0,\infty]$ such that the following conditions are fulfilled:
\begin{enumerate}
\item We have $\bbe[\xi_n] = \mu$ for all $n \in \bbn$.

\item $( \xi_n )_{n \in \bbn}$ is $\bbp$-almost surely Ces\`{a}ro convergent to $\mu$.
\end{enumerate}

If the sequence $( \xi_n )_{n \in \bbn}$ satisfies the SLLN, then conditions (a) and (b) in the von Weizs\"{a}cker theorem (Theorem \ref{thm-W}) are fulfilled by choosing the original sequence $( \xi_n )_{n \in \bbn}$ and $\xi = \mu$. Note that (b) is satisfied, because $(\xi_n \bbI_{\{ \xi < \infty \}})_{n \in \bbn}$ is $L^1(\bbp)$-bounded. Therefore, the sets $C$ and $\bar{C}$ are given by
\begin{align*}
C = \conv \, \{ \xi_n : n \in \bbn \} \quad \text{and} \quad
\bar{C} = \conv \, \{ \bar{\xi}_n : n \in \bbn \}.
\end{align*}
The following result shows that the sets $C$ and $\bar{C}$ are either bounded in probability or hereditarily unbounded in probability.

\begin{proposition}\label{prop-SLLN}
Suppose that $( \xi_n )_{n \in \bbn}$ satisfies the SLLN. Then the following statements are equivalent:
\begin{enumerate}
\item[(i)] We have $\mu < \infty$.

\item[(ii)] We have $\xi_1 \in L^1$.

\item[(iii)] The set $C$ is bounded in probability.

\item[(iv)] The set $\bar{C}$ is bounded in probability.
\end{enumerate}
Moreover, the following statements are equivalent:
\begin{enumerate}
\item[(i)] We have $\mu = \infty$.

\item[(ii)] We have $\xi_1 \notin L^1$.

\item[(iii)] The set $C$ is hereditarily unbounded in probability.

\item[(iv)] The set $\bar{C}$ is hereditarily unbounded in probability.
\end{enumerate}
\end{proposition}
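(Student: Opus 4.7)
The plan is to reduce both equivalences directly to Corollaries~\ref{cor-finite} and~\ref{cor-infinite}. The key preparatory observation, already recorded in the paragraph preceding the statement, is that under the SLLN we may take the original sequence $(\xi_n)_{n\in\bbn}$ itself as the subsequence in Theorem~\ref{thm-W} and choose the constant random variable $\xi\equiv\mu$. Indeed, condition (a) of Theorem~\ref{thm-W} is just the SLLN assumption, and condition (b) holds because on $\{\xi<\infty\}$ (which is either $\Omega$ or $\varnothing$ depending on whether $\mu<\infty$ or $\mu=\infty$) the sequence $(\xi_n \bbI_{\{\xi<\infty\}})_{n\in\bbn}$ is $L^1(\bbp)$-bounded: in the finite case $\bbe[\xi_n]=\mu<\infty$, and in the infinite case the indicator is $0$.

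For the first block of equivalences, the implication (i)$\Leftrightarrow$(ii) is immediate from $\bbe[\xi_1]=\mu$, so $\mu<\infty$ is the same as $\xi_1\in L^1$. Since $\xi\equiv\mu$, the condition $\xi<\infty$ almost surely is equivalent to $\mu<\infty$, and therefore Corollary~\ref{cor-finite} applied to the sequence $(\xi_n)_{n\in\bbn}$ yields the equivalence of (i) with (iii) and with (iv).

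For the second block, the equivalence (i)$\Leftrightarrow$(ii) is again immediate (if $\xi_1\in L^1$, then $\mu=\bbe[\xi_1]<\infty$, and conversely). Since $\xi\equiv\mu$, the event $\xi=\infty$ almost surely is equivalent to $\mu=\infty$, so Corollary~\ref{cor-infinite} applied to $(\xi_n)_{n\in\bbn}$ gives the equivalence of (i) with (iii) and with (iv).

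There is no real obstacle in the argument; the content is already packaged in the two corollaries, and the only thing to verify is that the SLLN really does produce a valid instance of Theorem~\ref{thm-W} with constant limit, which is the short verification made in the first paragraph. The proof therefore reduces to citing Corollaries~\ref{cor-finite} and~\ref{cor-infinite} together with the trivial observation $\bbe[\xi_1]=\mu$.
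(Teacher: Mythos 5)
Your proposal is correct and follows essentially the same route as the paper: the paper's proof is a one-line reduction to Proposition~\ref{prop-main} (with the verification that the SLLN yields a valid instance of Theorem~\ref{thm-W} with $\xi\equiv\mu$ given in the preceding paragraph), and citing Corollaries~\ref{cor-finite} and~\ref{cor-infinite} is just that same reduction packaged one step further.
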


\begin{proof}
This is a consequence of Proposition \ref{prop-main}.
\end{proof}

Now, we consider some examples where $( \xi_n )_{n \in \bbn}$ satisfies the SLLN. 

\begin{proposition}
Suppose that one of the following conditions is fulfilled:
\begin{enumerate}
\item[(a)] The sequence $( \xi_n )_{n \in \bbn}$ is i.i.d. with $\xi_1 \in L^1$.

\item[(b)] The sequence $( \xi_n )_{n \in \bbn}$ is a $\phi$-mixing sequence of identically distributed random variables with $\xi_1 \in L^2$ such that
\begin{align*}
\sum_{n=1}^{\infty} \sqrt{\phi(n)} \, \frac{\ln n}{n} < \infty.
\end{align*}

\item[(c)] The sequence $( \xi_n )_{n \in \bbn}$ is a $\rho$-mixing sequence of identically distributed random variables with $\xi_1 \in L^2$ such that
\begin{align*}
\sum_{n=1}^{\infty} \rho(n) < \infty.
\end{align*}

\item[(d)] We have $(\xi_n)_{n \in \bbn} \subset L^2$, there exists $\mu \in \bbr_+$ such that $\bbe[\xi_n] = \mu$ for all $n \in \bbn$, we have
\begin{align*}
\sum_{n \in \bbn} \frac{\Var[\xi_n]}{n^2} < \infty,
\end{align*}
and there exists a constant $c \in (0,\infty)$ such that for all $N \in \bbn$ we have
\begin{align}\label{corr}
\Var \bigg[ \sum_{n=1}^N \xi_n \bigg] \leq c \sum_{n=1}^N \Var[\xi_n].
\end{align}
\end{enumerate}
Then the sets $C$ and $\bar{C}$ are bounded in probability.
\end{proposition}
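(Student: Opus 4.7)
The plan is to show that under each of the conditions (a)--(d) the sequence $(\xi_n)_{n \in \bbn}$ satisfies the SLLN in the sense of this section with a \emph{finite} constant $\mu$, and then to invoke Proposition~\ref{prop-SLLN}: since $\mu < \infty$, the sets $C$ and $\bar{C}$ are bounded in probability, which is exactly the claim. The constancy of $\bbe[\xi_n]$ is immediate from identical distribution in (a)--(c) and is part of the hypothesis in (d); moreover $\mu < \infty$ follows from $\xi_1 \in L^1$ in (a), from $\xi_1 \in L^2$ in (b), (c), and from $\mu \in \bbr_+$ in (d). So the only nontrivial task in each case is to establish the $\bbp$-almost sure Ces\`aro convergence $\frac{1}{N} \sum_{n=1}^N \xi_n \to \mu$.

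For (a) this is the classical Kolmogorov strong law for i.i.d.\ summands in $L^1$. For (b) and (c) the conclusion is a well-known strong law for identically distributed $\phi$-mixing, respectively $\rho$-mixing, sequences in $L^2$ under exactly the stated summability conditions on the mixing coefficients; the standard route is to exploit the mixing assumption to bound $\Var\bigl[\sum_{n=m+1}^{m+k} \xi_n\bigr]$ uniformly in $m$, then to apply a Rademacher--Menshov-type maximal inequality, and finally Kronecker's lemma. For (d) one uses (\ref{corr}) to obtain $\Var[\bar{\xi}_n - \mu] \leq (c/n^2) \sum_{k=1}^n \Var[\xi_k]$, after which Chebyshev's inequality, the summability $\sum_k \Var[\xi_k]/k^2 < \infty$ and the Borel--Cantelli lemma yield almost sure convergence of $\bar{\xi}_{n_j} \to \mu$ along a geometric subsequence $n_j = \lfloor \alpha^j \rfloor$; the nonnegativity of $(\xi_n)$ then permits the usual sandwich $(\bar{\xi}_{n_j}) \cdot (n_j / n_{j+1}) \leq \bar{\xi}_n \leq (\bar{\xi}_{n_{j+1}}) \cdot (n_{j+1}/n_j)$ for $n_j \leq n < n_{j+1}$, and letting $\alpha \downarrow 1$ upgrades the convergence to the full sequence.

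The main obstacle is bibliographic rather than conceptual: one must locate and cite the sharp versions of the mixing SLLNs for (b) and (c) that match the summability rates stated (in particular the logarithmic factor $\ln n / n$ in the $\phi$-mixing case). Once the SLLN is verified in each case, the boundedness in probability of $C$ and $\bar{C}$ is a one-line invocation of Proposition~\ref{prop-SLLN}, so the entire proof collapses to the verification step above.
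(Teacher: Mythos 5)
Your proposal is correct and follows essentially the same route as the paper: in each of the cases (a)--(d) one verifies that $(\xi_n)_{n \in \bbn}$ satisfies the SLLN with finite $\mu$, and then Proposition~\ref{prop-SLLN} immediately yields that $C$ and $\bar{C}$ are bounded in probability. The only difference is that the paper disposes of the SLLN verification by citation (to Kuczmaszewska for (b), (c) and to Janisch for (d)), whereas you sketch the underlying maximal-inequality and subsequence/sandwich arguments yourself.
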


\begin{proof}
In any of the situations (a)--(d) the sequence $(\xi_n)_{n \in \bbn}$ satisfies the SLLN. This is immediately clear for (a). In situation (b) this follows from \cite[Cor. 1]{Kuc} and noting that $\phi$ remains invariant under adding constants to the sequence; similarly in situation (c) this follows from \cite[Cor. 4]{Kuc} and noting that $\rho$ remains invariant under adding constants to the sequence. In Situation (d) we use \cite[Thm. 1]{Janisch}. Consequently, by Proposition \ref{prop-SLLN} the sets $C$ and $\bar{C}$ are bounded in probability.
\end{proof}

Note that condition (\ref{corr}) is satisfied if $\bbe[\xi_n \xi_m] \leq \mu^2$ for all $n,m \in \bbn$ with $n \neq m$. In this case the result also follows from \cite[Thm. 1]{Etemadi}.

\section*{Acknowledgement}

I am grateful to an anonymous referee for helpful comments and suggestions.


\begin{thebibliography}{20}

\bibitem{Berkes} Berkes, I. (1990):
  An extension of the Koml\'{o}s subsequence theorem.
  \textit{Acta Math. Hungar.} {\bf 55}(1--2), 103--110.   

\bibitem{B-Schach} Brannath, W., Schachermayer, W. (1999): A bipolar theorem for $L_+^0(\Omega,\calf,\bbp)$. S\'eminaire de Probabilit\'es, XXXIII, Lecture Notes in Mathematics, vol. 1709, Springer, Berlin, pp. 349--354.

\bibitem{Etemadi} Etemadi, N. (1983):
  On the laws of large numbers for nonnegative random variables.
  \textit{J. Multivar. Anal.} {\bf 13}(1), 187--193.   

\bibitem{Janisch} Janisch, M. (2020):
  Kolmogoroff's strong law of large numbers for pairwise uncorrelated random variables.
  arXiv: 2005.03967v2.    
  
\bibitem{Kabanov-Safarian} Kabanov, Y.~M., Safarian, M. (2009):
  \textit{Markets with Transaction Costs. Mathematical Theory}. Springer, Berlin.  
  
\bibitem{Kostas} Kardaras, C. (2015):
  Convexity and topology on measure spaces. Minerva lectures at Columbia University. \\ {\tt (http://stats.lse.ac.uk/kardaras/publications/L0\_columbia.pdf)}
    
\bibitem{Komlos} Koml\'{o}s, J. (1967):
  A generalization of a problem of Steinhaus.
  \textit{Acta Math. Sci. Hung.} {\bf 18}(1--2), 217--229.   

\bibitem{Kuc} Kuczmaszewska, A. (2011):
  On the strong law of large numbers for $\phi$-mixing and $\rho$-mixing random variables.
  \textit{Acta Math. Hungar.} {\bf 132}(1--2), 174--189.  
    
\bibitem{W} von Weizs\"{a}cker, H. (2004):
  Can one drop $L^1$-boundedness in Koml\'{o}s subsequence theorem?
  \textit{Amer. Math. Monthly} {\bf 111}(10), 900--903. 
  
\end{thebibliography}
\end{document}